\documentclass{amsart}
\pdfoutput=1
\usepackage{fullpage, amsmath, amsthm, amsfonts, amssymb, amscd}
\usepackage{mathrsfs}
\usepackage{hyperref}
\usepackage{stmaryrd} 

\newtheorem{thm}{Theorem}[section]
\newtheorem*{thm*}{Theorem}
\newtheorem*{thmA}{Theorem A}
\newtheorem*{thmB}{Theorem B}
\newtheorem{coro}[thm]{Corollary}
\newtheorem*{coro*}{Corollary}
\newtheorem{prop}[thm]{Proposition}
\newtheorem{lemma}[thm]{Lemma}

\newtheorem*{claim}{Claim}

\theoremstyle{remark}

\theoremstyle{definition}
\newtheorem{remark}[thm]{Remark}

\newtheorem{defn}[thm]{Definition}
\newtheorem{exa}[thm]{Example}

\newtheorem*{ackn}{Acknowledgements}

\newenvironment{pfClaim}{\,{\em Proof of claim:}}{\hfill$\boxtimes$\newline\newline}

\title{A Note on Injectivity of Frobenius on Local Cohomology \\ of Global Complete Intersections}
\author{Eric Canton}
\address{Department of Mathematics, University of Nebraska -- Lincoln, 203 Avery Hall, Lincoln NE 68588}
\email{ecanton2@math.unl.edu}

\thanks{The author was partially supported by NSF grant DMS \#1247354.}

\begin{document}
\begin{abstract}
  Given a graded complete intersection ideal $J = (f_1, \dots, f_c) \subseteq k[x_0, \dots, x_n] = S$, where $k$ is a field of characteristic 
  $p > 0$ such that $[k:k^p] < \infty$, we show that if $S/J$ has an isolated non-F-pure point then the Frobenius action on top local cohomology 
  $H^{n+1-c}_\mathfrak{m}(S/J)$ is injective in sufficiently negative degrees, and we compute the least degree of any kernel element.
  If $S/J$ has an isolated singularity, we are also able to give an effective bound on $p$ ensuring the Frobenius action on 
  $H^{n+1-c}_\mathfrak{m}(S/J)$ is injective in all negative degrees, extending a result of Bhatt and Singh in the hypersurface case. 
\end{abstract}
\keywords{Frobenius action, local cohomology, complete intersection. 2010 MSC: 13A35, 13D45} 
\maketitle

\section{Introduction}
Let $k$ be a field of characteristic $p > 0$ such that $[k:k^p] < \infty$. 
We study the kernel of the Frobenius action on local cohomology of a graded complete intersection 
$R = k[x_0, \dots, x_n]/(f_1, \dots, f_c)$ under the assumption that Frobenius is 
pure on $R_\mathfrak{p}$ for all primes $\mathfrak{p}$ different from the graded maximal ideal $\mathfrak{m} = (x_0, \dots, x_n)$. 
Let $J = (f_1, \dots, f_c)$. Recall that $R_\mathfrak{p}$ has a pure Frobenius homomorphism for a prime $\mathfrak{p} \ne \mathfrak{m}$ of $S$ containing $J$ if and only if 
$(f_1\cdots f_c)^{p-1} \not\in \mathfrak{p}^{[p]}$ \cite{Fedder}, where $\mathfrak{p}^{[p]} = (a^p \,|\, a \in \mathfrak{p})$. The first of our main results is the following.

\begin{thmA}[\ref{Injectivity Theorem}]
  Suppose $(f_1 \cdots f_c)^{p-1} \not\in \mathfrak{p}^{[p]}$ for any prime $\mathfrak{p} \ne \mathfrak{m}$ with $J \subseteq \mathfrak{p}$, but $(f_1 \cdots f_c)^{p-1} \in \mathfrak{m}^{[p]}$.
  Set $a(R) = - (n+1) + \sum_1^c \deg(f_i)$. Let $\tau$ be the smallest ideal of $S$ such that $J \subseteq \tau$ and
  $(f_1\cdots f_c)^{p-1} \in \tau^{[p]}$. Note in this case $\sqrt{\tau} = \mathfrak{m}$; set $\ell = \max\{s \,|\, \mathfrak{m}^s \not\subseteq \tau\} < \infty$. Then 
  the below Frobenius action is injective:
  \[ F: H^{n+1-c}_\mathfrak{m}(R)_{< a(R) - \ell}  \to H^{n+1-c}_\mathfrak{m}(R)_{< p(a(R) - \ell)}. \]
\end{thmA}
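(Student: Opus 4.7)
The plan is to reduce the injectivity statement to a surjectivity statement via graded local duality and then identify the dual map with a concrete Cartier-type operator using Fedder's trick. Since $J$ is a complete intersection, $R = S/J$ is graded Gorenstein with $\omega_R \cong R(a(R))$, and graded local duality gives $H^{n+1-c}_\mathfrak{m}(R)^\vee \cong \omega_R$. The Frobenius $F$ on $H^{n+1-c}_\mathfrak{m}(R)$, which multiplies degrees by $p$, dualizes to an additive map $\psi \colon \omega_R \to \omega_R$ satisfying $\psi([\omega_R]_{pe}) \subseteq [\omega_R]_e$, and $F$ is injective on $[H^{n+1-c}_\mathfrak{m}(R)]_d$ if and only if $\psi$ is surjective onto $[\omega_R]_{-d}$.

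The main step is identifying $\psi$ explicitly. Fedder's trick uses the factorization $S/J \to S/J^{[p]} \to S/J$ (Frobenius followed by the canonical surjection), together with $(f_1 \cdots f_c)^{p-1} \cdot f_i = f_i^p \prod_{j\neq i} f_j^{p-1} \in J^{[p]}$, to identify $\psi$ (via the Koszul resolution of $R$ over $S$ and the canonical Cartier operator $\Phi^S \colon F_*S \to S$ on the polynomial ring, well defined because $[k:k^p] < \infty$) with the map on $R$ induced from $S$ by $g \mapsto \Phi^S\bigl((f_1 \cdots f_c)^{p-1}\, g\bigr)$. This descends modulo $J$ because of the identity above together with $\Phi^S(J^{[p]}) \subseteq J$. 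Writing $(f_1 \cdots f_c)^{p-1} = \sum r_\beta^p\, e_\beta$ in the free $S^p$-basis $\{e_\beta\}$ of $S$ (a finite family combining a $p$-basis of $k$ over $k^p$ with the monomials $x^\alpha$, $0 \le \alpha_i < p$), a direct computation shows that the image of $\psi$ in $R$ equals $\tau / J$, where $\tau = J + (r_\beta)$. The standard $p$-basis criterion ``$h \in I^{[p]}$ iff every $S^p$-basis coefficient of $h$ lies in $I$'' then identifies $\tau$ with the smallest ideal of $S$ containing $J$ with $(f_1 \cdots f_c)^{p-1} \in \tau^{[p]}$, matching the theorem's definition.

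With $\psi$ identified, the degree bound is immediate. Since $\mathfrak{m}^{\ell+1} \subseteq \tau$, every monomial of degree $e' \ge \ell+1$ in $S$ already lies in $\tau$, so $[\tau/J]_{e'} = [R]_{e'}$. Translating via $[\omega_R]_e \cong [R]_{e + a(R)}$, the map $\psi \colon [\omega_R]_{-pd} \to [\omega_R]_{-d}$ surjects onto $[R]_{a(R) - d}$ whenever $a(R) - d \ge \ell + 1$, i.e., whenever $d < a(R) - \ell$, giving the claimed injectivity of Frobenius on $[H^{n+1-c}_\mathfrak{m}(R)]_d$ throughout that range. The main technical obstacle is verifying the Fedder identification of $\psi$ with the Cartier-operator formula rigorously, in particular the bookkeeping needed when $k$ is merely $F$-finite rather than perfect and the matching of grading conventions between local duality and Fedder's trick.
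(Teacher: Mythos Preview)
Your approach is correct but genuinely different from the paper's. The paper argues directly with \v{C}ech representatives: using the identification $H_\mathfrak{m}^{n+1-c}(R)\cong\mathrm{Ann}_{H_\mathfrak{m}^{n+1}(S)[-d]}(J)$ and the description of the Frobenius action as $[g/x^q]\mapsto[f^{p-1}g^p/x^{pq}]$, a putative kernel class of degree $<a(R)-\ell$ forces $f^{p-1}\in(\mathfrak{m}^{[q]}:g)^{[p]}$, hence $\tau\subseteq(\mathfrak{m}^{[q]}:g)$ and $g\in(\mathfrak{m}^{[q]}:\tau)\setminus\mathfrak{m}^{[q]}$; the paper's preparatory computation $(n+1)q-M_q(\tau)=\mathrm{reg}(S/\tau)+(n+1)$ for $q\gg0$ then gives $\deg(g)-(n+1)q\ge -\ell-(n+1)$, contradicting the degree assumption. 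No duality or Cartier operators appear.

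You instead pass to the graded Matlis dual, identify the trace map $\psi$ with $g\mapsto\Phi^S(f^{p-1}g)\bmod J$ via Fedder's trick, and compute $\mathrm{Im}(\psi)=\tau/J$, from which the surjectivity in degrees $\ge\ell+1$ is immediate. This is more conceptual and explains \emph{why} the bound is exactly $a(R)-\mathrm{reg}(S/\tau)$: the image of the Cartier operator is literally $\tau$. It also recovers the sharpness remark for free, since $[\tau/J]_\ell\subsetneq[R]_\ell$ gives a non-surjective degree and hence a kernel class in degree $a(R)-\ell$. The cost is the machinery you flag (graded local duality, the $F_*S$-module structure of $\mathrm{Hom}_S(F_*S,S)$, and careful grading conventions for $F_*\omega_R$ when $k$ is only $F$-finite). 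The paper's route avoids all of this by staying inside $H_\mathfrak{m}^{n+1}(S)$ and doing the dual computation ``by hand'' through the colon ideals $(\mathfrak{m}^{[q]}:\tau)$ and $(\mathfrak{m}^{[q]}:\mathfrak{m}^\ell)$; in effect, the paper's $M_q$-invariant is the \v{C}ech-side shadow of your image computation.
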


The proof also shows that this bound is sharp: there always exists a class $\alpha \in H^{n+1-c}_\mathfrak{m}(R)$ of degree 
$a(R) - \ell$ such that $F(\alpha) = 0$. The above theorem now yields the following characterization of 
graded complete intersection quotients such that $\sqrt{\tau} = \mathfrak{m}$.
The main advantage of this characterization is that it depends only on $n$, the codimension
$c$, and the degree $d := \sum_1^c \deg(f_j)$. 

\begin{coro*}[\ref{Degree Bound}]
  Let $\tau$ be as in Theorem A and suppose $\tau \ne S$. The condition $\sqrt{\tau} = \mathfrak{m}$ is equivalent to the Frobenius action on 
  $H^{n+1-c}_\mathfrak{m}(R)$ being injective in degrees $< -(n+1-c)d$. 
\end{coro*}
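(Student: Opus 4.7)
The plan is to deduce both directions from Theorem~A by controlling $\ell$ in terms of $n$, $c$, and $d$ alone. Since $a(R) = d - (n+1)$, the inequality $a(R) - \ell \ge -(n+1-c)d$ is equivalent to $\ell \le (n+2-c)d - (n+1)$; the forward implication then follows from Theorem~A combined with this bound on $\ell$, while the reverse is the contrapositive statement that $\sqrt{\tau} \ne \mathfrak{m}$ forces Frobenius to have kernel classes in arbitrarily negative degrees.

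\textbf{Forward direction.} Suppose $\sqrt{\tau} = \mathfrak{m}$. A $p$-basis decomposition of $(f_1 \cdots f_c)^{p-1}$, a polynomial of degree $(p-1)d$, exhibits $\tau$ as an extension of $J$ by an ideal $L$ with homogeneous generators in degrees at most $\lfloor (p-1)d / p \rfloor \le d - 1$. In the Cohen--Macaulay quotient $R = S/J$, the image $\bar{L} = \tau/J$ is $\mathfrak{m}R$-primary by hypothesis, hence of grade $\dim R = n+1-c$. A graded prime avoidance argument over the minimal primes of $R$ (after enlarging $k$ to an infinite field by flat base change, which preserves $[k:k^p] < \infty$) then yields a homogeneous regular sequence $g_1, \ldots, g_{n+1-c} \in \bar{L}$ with each $\deg g_i \le d - 1$. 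Then $R/(g_1, \ldots, g_{n+1-c})$ is Artinian with socle degree equal to $a(R) + \sum \deg g_i \le a(R) + (n+1-c)(d-1) \le (n+2-c)d - (n+1)$; since $S/\tau$ is a graded quotient of this Artinian ring, $\ell$ enjoys the same upper bound.

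\textbf{Reverse direction.} I argue the contrapositive. If $\sqrt{\tau} \ne \mathfrak{m}$, pick a prime $\mathfrak{p} \supseteq \tau$ with $\mathfrak{p} \ne \mathfrak{m}$. The containment $\tau \subseteq \mathfrak{p}$ gives $(f_1 \cdots f_c)^{p-1} \in \mathfrak{p}^{[p]}$, so by Fedder's criterion $R_{\mathfrak{p}}$ is not F-pure. As $R_{\mathfrak{p}}$ is Gorenstein (being a localization of a complete intersection), this translates into a nonzero Frobenius kernel class for the top local cohomology at $\mathfrak{p}$, and I would promote such a class to a nonzero kernel class in $H^{n+1-c}_{\mathfrak{m}}(R)$ by representing it as a \v{C}ech cocycle for $\mathfrak{m}$ and multiplying by increasing powers of the inverse of a homogeneous linear form $x_i \notin \mathfrak{p}$ (available since $\mathfrak{m} \not\subseteq \mathfrak{p}$). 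This produces nonzero kernel classes of Frobenius in every sufficiently negative degree, in particular violating injectivity in degrees $< -(n+1-c)d$.

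\textbf{Main obstacle.} The hardest step is the degree-controlled regular sequence in the forward direction: one must combine the explicit bound $\lfloor (p-1)d / p \rfloor \le d - 1$ on generators of $L$ with graded prime avoidance over the finitely many minimal primes of $R$ to secure a regular sequence with $\deg g_i \le d - 1$, and this is precisely what makes the resulting socle degree bound independent of $p$. In the reverse direction, the subtlety lies in ensuring that the \v{C}ech-lifted class remains nonzero in $H^{n+1-c}_{\mathfrak{m}}(R)$ after the degree-shifting multiplications, which requires care about the nontriviality of the local Gorenstein kernel class.
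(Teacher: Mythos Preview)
Your forward direction is essentially the paper's: both extend $f_1,\dots,f_c$ to a full homogeneous regular sequence inside $\tau$ using generators of degree at most $\lfloor (p-1)d/p\rfloor < d$ (the paper cites \cite{BMS} for this bound, you obtain it directly from the $p$-basis expansion of $(f_1\cdots f_c)^{p-1}$), and then bound $\mathrm{reg}(S/\tau)$ by the socle degree of the resulting Artinian complete intersection via a Hilbert series computation. That half is fine and matches the paper up to working in $R$ versus $S$.

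The reverse direction has a genuine gap. The kernel class you extract from non-F-purity of $R_\mathfrak{p}$ lives in $H^{\dim R_\mathfrak{p}}_{\mathfrak{p} R_\mathfrak{p}}(R_\mathfrak{p})$ with $\dim R_\mathfrak{p} < n+1-c$, and there is no mechanism for ``representing it as a \v{C}ech cocycle for $\mathfrak{m}$'' in cohomological degree $n+1-c$: these are different modules in different cohomological degrees. Furthermore, $H^{n+1-c}_\mathfrak{m}(R)$ is $\mathfrak{m}$-torsion and carries no $R_{x_i}$-module structure, so ``multiplying by increasing powers of $x_i^{-1}$'' is not a well-defined operation on it. The obstacle you flag at the end is therefore not a nontriviality check but a structural one: the proposed construction does not produce elements of $H^{n+1-c}_\mathfrak{m}(R)$ at all. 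The paper handles this direction entirely within the identification $H^{n+1-c}_\mathfrak{m}(R)\cong\mathrm{Ann}_{H^{n+1}_\mathfrak{m}(S)[-d]}(J)$: the sharpness remark following Theorem~A already produces, for every $q$, a nonzero kernel class $[g/x^q]$ with $g\in(\mathfrak{m}^{[q]}:\tau)\setminus\mathfrak{m}^{[q]}$ of degree $M_q(\tau)$, and Proposition~\ref{Regularity Proposition} shows that $M_q(\tau)-(n+1)q$ is unbounded below precisely when no power of $\mathfrak{m}$ lies in $\tau$, i.e., when $\sqrt{\tau}\ne\mathfrak{m}$.
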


An easy way to ensure that a complete intersection $R$ satisfies the hypothesis of theorem A is to 
assume $R$ has isolated singularity. Under this additional assumption we are able 
to combine an argument of Fedder \cite[Theorem 2.1]{FedderGradedCI} with one of Bhatt and Singh to generalize 
\cite[Theorem 3.5]{Bhatt-Singh}.

\begin{thmB}[\ref{Bhatt-Singh style}]
  Assume $R$ has an isolated singularity. If $p \ge (n+1-c)(d-c)$ then the Frobenius action on $H^{n+1-c}_\mathfrak{m}(R)$ is 
  injective in negative degrees. 
\end{thmB}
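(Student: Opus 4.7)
The plan is to combine Theorem A with an argument bounding the integer $\ell$ there using the isolated-singularity hypothesis.

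Step 1 --- verifying Theorem A applies. Since $R$ has isolated singularity, $R_\mathfrak{p}$ is regular for every prime $\mathfrak{p}\ne\mathfrak{m}$ containing $J$, and regular local rings have injective (hence pure) Frobenius. By Fedder's criterion, $(f_1\cdots f_c)^{p-1}\notin \mathfrak{p}^{[p]}$ for all such $\mathfrak{p}$. If moreover $(f_1\cdots f_c)^{p-1}\notin \mathfrak{m}^{[p]}$, then $R$ itself is F-pure and the Frobenius action on $H^{n+1-c}_\mathfrak{m}(R)$ is injective in every degree, so there is nothing to prove. Otherwise Theorem A applies and yields injectivity in degrees $< a(R)-\ell = d-(n+1)-\ell$; injectivity in all negative degrees thus reduces to showing $\ell \le d-(n+1)$.

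Step 2 --- bounding $\ell$ via a Jacobian overideal. Let $K$ be the ideal generated by $J$ together with all $c\times c$ minors of the Jacobian matrix $(\partial f_i/\partial x_j)$. By the Jacobian criterion applied to isolated singularity, $K$ is $\mathfrak{m}$-primary. Following Fedder's graded complete intersection argument \cite[Theorem 2.1]{FedderGradedCI}, I would prove
\[
(f_1\cdots f_c)^{p-1} \in K^{[p]} + J^{[p]}
\]
by starting from the Euler identities $(\deg f_i)f_i = \sum_j x_j\partial_j f_i$ (valid since $p$ exceeds each $\deg f_i$ under our hypothesis), expanding $(f_1\cdots f_c)^{p-1}$ multinomially, and reorganizing the sum to exhibit $p$-th powers of $c\times c$ Jacobian minors. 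This gives $\tau \subseteq K$, hence $\ell \le \max\{s : \mathfrak{m}^s\not\subseteq K\}$.

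Step 3 --- bounding the socle degree of $S/K$. I would then bound the socle degree of $S/K$ by a Hilbert series or Koszul complex analysis: the generators of $K$ have degrees $\deg f_i$ and $d-c$ (for the Jacobian minors), and an explicit degree count relying on the fact that enough of the Jacobian minors form a regular sequence modulo $J$ (guaranteed by isolated singularity) yields a socle-degree bound of order $(n+1-c)(d-c)$. The hypothesis $p \ge (n+1-c)(d-c)$ is then exactly what is needed to convert this to $\ell \le d-(n+1)$ and to validate the reorganization step above.

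The main obstacle I anticipate is the Fedder-style identity of Step 2. The combinatorial heart of the argument lies in carefully verifying that, after the Euler multinomial expansion of $(f_1\cdots f_c)^{p-1}$, one can collect sufficiently many $p$-th powers of $c\times c$ Jacobian minors modulo $J^{[p]}$; this is also the unique point where the explicit numerical bound $p \ge (n+1-c)(d-c)$ enters, and where the extension of \cite[Theorem 3.5]{Bhatt-Singh} from hypersurfaces to complete intersections demands genuinely new combinatorial input relative to the Bhatt--Singh analysis of the single-$f$ case.
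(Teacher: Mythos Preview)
Your proposal does not match the paper's approach and contains a fatal logical error in Step~2. From $\tau\subseteq K$ you conclude $\ell\le\max\{s:\mathfrak m^s\not\subseteq K\}$, but the implication runs the other way: if $\tau\subseteq K$ then $\mathfrak m^s\not\subseteq K$ implies $\mathfrak m^s\not\subseteq\tau$, so in fact $\max\{s:\mathfrak m^s\not\subseteq K\}\le\ell$. Bounding $\mathrm{reg}(S/K)$ from above therefore gives no upper bound on $\ell=\mathrm{reg}(S/\tau)$, and Step~3 is vacuous. To bound $\ell$ via Theorem~A you would need an $\mathfrak m$-primary ideal \emph{contained in} $\tau$, not one containing it. Independently, the claimed membership $(f_1\cdots f_c)^{p-1}\in K^{[p]}$ is not what Fedder's argument establishes, and your sketched route to it fails already for $c=1$: the multinomial expansion of $(\sum_j x_j\partial_j f)^{p-1}$ has every exponent at most $p-1$, so no $p$-th power of any $\partial_j f$ appears, and there is no reason for $f^{p-1}\in(f,\partial_0 f,\dots,\partial_n f)^{[p]}$.

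The paper does \emph{not} deduce Theorem~B from Theorem~A. It argues directly by contradiction: given a nonzero class $[g/x^{q/p}]$ of negative degree killed by Frobenius, one chooses a minimal exponent vector $\mathbf t\in\{0,\dots,p-1\}^c$ with $f^{\mathbf t}g^p\in\mathfrak m^{[q]}$ and applies $\partial_i$ to this \emph{containment} (this is Fedder's determinant trick---differentiating the relation $f^{\mathbf t}g^p\in\mathfrak m^{[q]}$, not an Euler identity). Via the adjugate matrix one obtains $f^{\mathbf t'}g^p\in(\mathfrak m^{[q]}:\mathrm{Jac}(R))$, and separately $f^{\mathbf t'}g^p\in(\mathfrak m^{[q]}:(f_1,f_2^{p-t_2},\dots,f_c^{p-t_c}))$ from the minimality of $\mathbf t$. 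A Hilbert-series bound on the complete-intersection ideal generated by suitable Jacobian minors together with $f_1,f_2^{p-t_2},\dots,f_c^{p-t_c}$ then forces $\deg(f^{\mathbf t'}g^p)$ to be large, and the resulting degree inequality yields $p<(n+1-c)(d-c)$. The essential point you missed is that the Jacobian enters through the colon $(\mathfrak m^{[q]}:\mathrm{Jac}(R))$ applied to the specific witness $g$, not through any containment between $\mathrm{Jac}(R)+J$ and $\tau$.
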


\begin{ackn}
  I would like to thank my advisor, Wenliang Zhang, for numerous helpful discussions. I would also like to thank Tom Marley, Anurag Singh, 
  and Thanh Vu for useful conversations, and the referee for many suggestions which improved the readability of this paper. 
\end{ackn}

\section{Notation and Conventions}
The letter $q$ will always denote an integer power $p^e$ of a prime $p > 0$. Let $S$ be a $\mathbb{Z}$-graded Noetherian ring containing a field $k$ of 
characteristic $p > 0$ with $[k:k^p] < \infty$ and posessing a unique maximal homogeneous ideal $\mathfrak{m}$. For any ideal $I$ of $S$, define 
$I^{[p]} = (f^p \,|\, f \in I)$. Similarly, we define $I^{[q]} = (f^q \,|\, f\in I)$ for all $q \ge 0$. 

\begin{defn}[Regularity]\label{regularity}
  Suppose $M$ is a $\mathbb{Z}$-graded $S$-module such that $M_\ell = 0$ for $\ell \gg 0$. 
  Define the {\em Castelnuovo-Mumford regularity} of $M$ to be
  \[ \mathrm{reg}(M) = \max\{\ell \,\,|\,\, (M)_\ell \ne 0\}. \]
\end{defn}

\begin{exa}\label{regularity example}
  If $I \subseteq S$ is a homogeneous ideal such that $\sqrt{I} = \mathfrak{m}$ then for all $\ell \gg 0$ we know $\mathfrak{m}^\ell \subseteq I$. Thus $(S/I)_\ell = 0$ for
  $\ell \gg 0$ and we can define $\mathrm{reg}(S/I)$ as above. The regularity of $S/I$ is the unique integer $a$ such that $\mathfrak{m}^a \not\subseteq I$ but
  $\mathfrak{m}^{a+1} \subseteq I$. 
\end{exa}

We introduce the following 
\begin{defn}
  Let $0 \ne I \subsetneq S$ be a proper homogeneous ideal. For each $q \ge 0$ define
  \[ M_q(I) = \max\left\{\ell \,\,\left|\,\, (\mathfrak{m}^{[q]} : I) \subseteq \mathfrak{m}^{[q]} + \mathfrak{m}^\ell \right.\right\}. \]
\end{defn}

\begin{exa}
  Suppose $S = k[x, y]$ and let $I = (x^a, y^b)$ with $a \le b$. For $q \le \min\{a, b\}$, $(\mathfrak{m}^{[q]} : I) = S$ so 
  $M_q(I) = 0$. For $a < q \le b$ the colon $(\mathfrak{m}^{[q]} : I)$ becomes $\mathfrak{m}^{[q]} + (x^{q - a})$ so $M_q(I) = q - a$.
  Finally for $q > b$ the colon is $\mathfrak{m}^{[q]} + (x^{q-a}y^{q-b})$ and so $M_q(I) = 2q - (a + b)$ for all $q \ge b$. In particular, 
  $2q - M_q(I) = a + b$ for all large $q$. Note that since $\mathfrak{m}^{a+b-1} \subseteq I$ but $\mathfrak{m}^{a+b-2}\not\subseteq I$, $2q - M_q(I) = \mathrm{reg}(S/I) + 2$. 
  Our next proposition shows this is always the case for $\mathfrak{m}$-primary $I$.
\end{exa}

\begin{lemma}\label{colon containment}
  Let $I$ be any ideal of $S$. We have a containment $(\mathfrak{m}^{[q]} : I) \subseteq (\mathfrak{m}^{[q]} : \mathfrak{m}^\ell)$ for all $q \gg 0$ if and only if $\mathfrak{m}^\ell \subseteq I$.
\end{lemma}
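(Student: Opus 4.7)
The plan is to use the self-duality of colons in the Gorenstein quotient $S/\mathfrak{m}^{[q]}$, together with a homogeneous degree argument. First I dispose of the easy direction: if $\mathfrak{m}^\ell \subseteq I$ and $f \in (\mathfrak{m}^{[q]}:I)$, then $f\mathfrak{m}^\ell \subseteq fI \subseteq \mathfrak{m}^{[q]}$, so $f \in (\mathfrak{m}^{[q]}:\mathfrak{m}^\ell)$. This gives the containment for every $q \ge 0$, and in particular for $q \gg 0$.

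For the converse, I rewrite the hypothesis $(\mathfrak{m}^{[q]}:I) \subseteq (\mathfrak{m}^{[q]}:\mathfrak{m}^\ell)$ in its equivalent ``transposed'' form $\mathfrak{m}^\ell \cdot (\mathfrak{m}^{[q]}:I) \subseteq \mathfrak{m}^{[q]}$, i.e., $\mathfrak{m}^\ell \subseteq \bigl(\mathfrak{m}^{[q]} : (\mathfrak{m}^{[q]}:I)\bigr)$. The key observation is that $x_0^q, \ldots, x_n^q$ is a regular sequence on the polynomial ring $S$, so the quotient $S/\mathfrak{m}^{[q]}$ is a zero-dimensional complete intersection and hence Artinian Gorenstein. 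For any Artinian Gorenstein ring $R$ and any ideal $\bar J$ of $R$ one has the double-colon identity $(0:_R(0:_R \bar J)) = \bar J$; pulling this back to $S$ gives $(\mathfrak{m}^{[q]}:(\mathfrak{m}^{[q]}:I)) = I + \mathfrak{m}^{[q]}$. Consequently the hypothesis forces
\[ \mathfrak{m}^\ell \;\subseteq\; I + \mathfrak{m}^{[q]} \qquad \text{for all } q \gg 0. \]

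It remains to strip off the $\mathfrak{m}^{[q]}$ term. I fix any single $q > \ell$ for which the above containment holds and use the graded structure: every nonzero homogeneous element of $\mathfrak{m}^{[q]} = (x_0^q, \ldots, x_n^q)$ has degree at least $q$. Given a monomial generator $x^\alpha$ of $\mathfrak{m}^\ell$ (necessarily of degree $\ell$), write $x^\alpha = a + b$ with $a \in I$ and $b \in \mathfrak{m}^{[q]}$, and extract the degree-$\ell$ component of both sides; since $I$ and $\mathfrak{m}^{[q]}$ are homogeneous, each component lies in the corresponding ideal, and the degree-$\ell$ component of $b$ must vanish because $\ell < q$. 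Thus $x^\alpha$ equals the degree-$\ell$ component of $a$, which lies in $I$, and so $\mathfrak{m}^\ell \subseteq I$. The main obstacle is establishing the Gorenstein double-colon identity; everything else is a short calculation and a degree count.
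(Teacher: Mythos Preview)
Your argument and the paper's share the same core idea: pass to the Artinian Gorenstein quotient $S/\mathfrak{m}^{[q]}$ and use colon duality there. The paper phrases this as the equivalence $(0:J)\subseteq(0:L)\iff L\subseteq J$ in a zero-dimensional Gorenstein ring, while you use the equivalent double-annihilator identity $(0:(0:\bar J))=\bar J$; either way one arrives at $\mathfrak{m}^\ell\subseteq I+\mathfrak{m}^{[q]}$ for $q\gg 0$.

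The genuine difference is in the last step. The paper invokes Krull's intersection theorem to get $\bigcap_{q\gg 0}(I+\mathfrak{m}^{[q]})=I$, which works for \emph{any} ideal $I$ and really uses the hypothesis ``for all $q\gg 0$''. You instead pick a single $q>\ell$ and use a graded degree count to strip off $\mathfrak{m}^{[q]}$. This is more elementary and in fact shows something sharper in the homogeneous case (one value of $q$ suffices), but it relies on $I$ being homogeneous: your sentence ``since $I$ and $\mathfrak{m}^{[q]}$ are homogeneous, each component lies in the corresponding ideal'' fails if $I$ is not graded. The lemma as stated allows arbitrary $I$, so strictly speaking this is a gap; in practice the lemma is only applied to homogeneous ideals, so your version suffices for the paper's purposes, but you should either add the homogeneity hypothesis explicitly or replace the degree argument by Krull's intersection theorem.
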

\begin{proof}
  Very generally, if $J$ and $L$ are ideals in a zero dimensional Gorenstein ring then we have a containment of colon ideals $(0: J) \subseteq (0 : L)$
  if and only if $L \subseteq J$. Since $S/\mathfrak{m}^{[q]}$ is zero dimensional and Gorenstein for all $q \ge 0$ it follows that for a specific
  $q$ we have $(\mathfrak{m}^{[q]} : I) \subseteq (\mathfrak{m}^{[q]} : \mathfrak{m}^\ell)$ if and only if $\mathfrak{m}^\ell + \mathfrak{m}^{[q]} \subseteq I + \mathfrak{m}^{[q]}$. For large $q$ and any fixed $\ell$
  we know $\mathfrak{m}^{[q]} \subseteq \mathfrak{m}^\ell$. Therefore, $\mathfrak{m}^\ell \subseteq \bigcap_{q \gg 0} (I + \mathfrak{m}^{[q]})$ if and only if 
  $(\mathfrak{m}^{[q]} : I) \subseteq (\mathfrak{m}^{[q]} : \mathfrak{m}^\ell)$ for all $q \gg 0$. Krull's intersection theorem implies $I = \bigcap_{q \gg 0} (I + \mathfrak{m}^{[q]})$,
  and the statement of the lemma follows.
\end{proof}

\begin{prop}\label{Regularity Proposition}
  Let $S = k[x_0, \dots, x_n]$ and $I \subsetneq S$ a proper ideal.
  \begin{enumerate}
    \item $(n+1)q - M_q(I) \le n + \ell$ for $q \gg 0$ if and only if $\mathfrak{m}^\ell \subseteq I$. \label{first part}
    \item If $\sqrt{I} = \mathfrak{m}$ then for $q \gg 0$, 
      \[ (n+1)q - M_q(I) = \mathrm{reg}(S/I) + (n+1).\]
  \end{enumerate}
\end{prop}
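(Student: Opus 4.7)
The plan is to reduce both parts to an explicit formula for the colon $(\mathfrak{m}^{[q]} : \mathfrak{m}^\ell)$, which one then combines with Lemma \ref{colon containment}. The key preliminary is the identity
\[ (\mathfrak{m}^{[q]} : \mathfrak{m}^\ell) = \mathfrak{m}^{[q]} + \mathfrak{m}^{(n+1)q - n - \ell}, \]
valid whenever $(n+1)q - n - \ell \ge 1$. I would prove this by a direct monomial analysis: a monomial $x^\alpha$ with every $\alpha_i \le q-1$ satisfies $x^\alpha \mathfrak{m}^\ell \subseteq \mathfrak{m}^{[q]}$ if and only if $|\alpha| \ge (n+1)q - n - \ell$. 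One direction is pigeonhole applied to $\alpha+\beta$; for the other, when $|\alpha| \le (n+1)(q-1) - \ell$ one exhibits nonnegative $\beta_i \le q-1-\alpha_i$ summing to $\ell$, so that no coordinate of $\alpha+\beta$ reaches $q$. Conceptually, the formula encodes that $S/\mathfrak{m}^{[q]}$ is Artinian Gorenstein with socle in degree $(n+1)(q-1)$ and that the annihilator of $\mathfrak{m}^\ell$ there is exactly the span of the top $\ell$ graded pieces.

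For part (\ref{first part}), Lemma \ref{colon containment} says $\mathfrak{m}^\ell \subseteq I$ is equivalent to $(\mathfrak{m}^{[q]}:I) \subseteq (\mathfrak{m}^{[q]}:\mathfrak{m}^\ell)$ for $q \gg 0$. Substituting the formula above, this containment reads $(\mathfrak{m}^{[q]}:I) \subseteq \mathfrak{m}^{[q]} + \mathfrak{m}^{(n+1)q - n - \ell}$, which by the very definition of $M_q(I)$ is the inequality $M_q(I) \ge (n+1)q - n - \ell$, i.e., $(n+1)q - M_q(I) \le n + \ell$. For part (2), let $r = \mathrm{reg}(S/I)$, finite by Example \ref{regularity example} since $\sqrt{I}=\mathfrak{m}$, so $\mathfrak{m}^{r+1} \subseteq I$ but $\mathfrak{m}^r \not\subseteq I$. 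Applying part (\ref{first part}) with $\ell = r+1$ yields the upper bound $(n+1)q - M_q(I) \le n + r + 1$ for $q \gg 0$. For the matching lower bound, once $q \ge r+1$ we have $\mathfrak{m}^{[q]} \subseteq \mathfrak{m}^q \subseteq I$; the fixed-$q$ zero-dimensional Gorenstein duality that opens the proof of Lemma \ref{colon containment} shows that $(\mathfrak{m}^{[q]}:I) \subseteq (\mathfrak{m}^{[q]}:\mathfrak{m}^r)$ would be equivalent to $\mathfrak{m}^r + \mathfrak{m}^{[q]} \subseteq I + \mathfrak{m}^{[q]} = I$, hence to $\mathfrak{m}^r \subseteq I$, which fails. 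Therefore $(\mathfrak{m}^{[q]}:I) \not\subseteq \mathfrak{m}^{[q]} + \mathfrak{m}^{(n+1)q - n - r}$, so $M_q(I) < (n+1)q - n - r$ and $(n+1)q - M_q(I) \ge n + r + 1$. Combining the two bounds yields equality for $q \gg 0$.

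The principal obstacle is the colon computation of the first paragraph; once it is in hand, both parts are essentially bookkeeping with the definition of $M_q$ and the lemma. A subtle point in part (2) is that Lemma \ref{colon containment} only gives a ``for $q \gg 0$'' biconditional, so for the lower bound one must instead invoke the underlying fixed-$q$ Gorenstein duality, which becomes usable precisely once $\mathfrak{m}^{[q]}$ has been absorbed into $I$.
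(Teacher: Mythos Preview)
Your proof is correct and follows the same route as the paper: establish the colon identity $(\mathfrak{m}^{[q]}:\mathfrak{m}^\ell)=\mathfrak{m}^{[q]}+\mathfrak{m}^{(n+1)q-n-\ell}$ (the paper cites \cite[3.2]{Bhatt-Singh} for this, while you supply the monomial argument), then feed it into Lemma~\ref{colon containment} and the definition of $M_q$. Your explicit fixed-$q$ argument for the lower bound in part~(2) is a welcome clarification of a step the paper handles more tersely by appealing to part~(\ref{first part}).
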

\begin{proof}
  To establish the first statement, suppose $(n+1)q - M_q(I) \le n + \ell$ for all $q \ge q_0$. Then by definition 
  \[ (\mathfrak{m}^{[q]} : I) \subseteq \mathfrak{m}^{[q]} + \mathfrak{m}^{(n+1)q - (n + \ell)}. \]
  Bhatt and Singh show \cite[3.2]{Bhatt-Singh} that the ideal on the right is $(\mathfrak{m}^{[q]} : \mathfrak{m}^\ell)$. Since this containment
  holds for all $q \ge q_0$ we conclude that $\mathfrak{m}^\ell \subseteq I$ using \eqref{colon containment}. This argument is easily reversed, and the 
  statement follows.

  If $\sqrt{I} = \mathfrak{m}$ then statement \eqref{first part} shows that for $q \gg 0$, $(n+1)q - M_q(I) = n + \ell$ where $\ell$ is the least
  integer such that $\mathfrak{m}^\ell \subseteq I$. Using \eqref{regularity example} we see that $\ell = \mathrm{reg}(S/I) + 1$. 
\end{proof}

\begin{remark}\label{lch identification}
  Let $S = k[x_0, \dots, x_n]$ and let $f_1, \dots, f_c \in S$ be homogeneous forms which form a regular sequence. Set $J = (f_1, \dots, f_c)$,
  $R = S/J$, $f = \prod_1^c f_j$, and $d = \deg(f) = \sum_1^c \deg(f_j)$. Then the Koszul complex $K_\bullet$ on the 
  forms $f_j$ gives the minimal graded free resolution of $R$ over $S$, and after applying $H_\mathfrak{m}^*$ to $K_\bullet$ we may identify
  \[\tag{$\star$}\label{identify} 
    H_\mathfrak{m}^{n+1-c}(R) = \mathrm{Ann}_{H_\mathfrak{m}^{n+1}(S)[-d]}(J).
  \]
  If we compute $H_\mathfrak{m}^{n+1}(S)$ via the \v{C}ech complex \v{C}$(x_0, \dots, x_n)$,
  then we can represent classes $\alpha \in H_\mathfrak{m}^{n+1-c}(R)$ by $[g/x^q]$ where $g \in S$, $x^q := (x_0\cdots x_n)^q$, and
  this class is zero if and only if $g \in \mathfrak{m}^{[q]}$. The identification \eqref{identify} is degree-preserving, so that
  classes $\alpha \in H_\mathfrak{m}^{n+1-c}(R)_t \subseteq H_\mathfrak{m}^{n+1}(S)[-d]_t = H_\mathfrak{m}^{n+1}(S)_{t - d}$ are represented by 
  $[g/x^q]$ with $g$ homogeneous, $g \in (\mathfrak{m}^{[q]} : J)$, and $\deg(g) - (n+1)q = t - d$. 
\end{remark}

\begin{remark}\label{frobenius action}
  Using notation from \eqref{lch identification}, the Frobenius homomorphism on $R$ lifts to a chain map
  $F_\bullet: K_\bullet \to K_\bullet$ which is {\em not} $S$-linear, though it is $\mathbb{Z}$-linear. In the case $c = 1$ (so that $f_1 = f$) 
  the Koszul complex and chain map associated to the Frobenius homomorphism take the form
  \[\begin{CD}
      0 @>>> S[-d] @>f>> S @>>> R @>>> 0 \\
      @.    @Vf^{p-1}F VV @VF VV @VF VV \\
      0 @>>> S[-d] @>f>> S @>>> R @>>> 0.
  \end{CD}\]
  For any $c$ the map $F_c: S[-d] \to S[-d]$ is still given by $f^{p-1}F = (f_1\cdots f_c)^{p-1}F$, so again identifying $H_\mathfrak{m}^{n+1-c}(R)$
  as in \eqref{identify} we can describe the Frobenius action on local cohomology of $R$ as $[g/x^q] \mapsto [f^{p-1}g^p/x^{pq}]$. 
\end{remark}

\section{Injectivity of Frobenius in codimension $c$}\label{Theorem section}

\begin{remark}
  Let $I \subsetneq k[x_0, \dots, x_n]$ be a proper homogeneous ideal and denote $k[x_0, \dots, x_n]/I$ by $A$. It is well-known that the local
  cohomology modules $H_{(x_0, \dots, x_n)}^i(A)$ are graded Artinian modules. We define the {\em a-invariant} of $A$, denoted $a(A)$, as
  $\mathrm{reg}\left(H_{(x_0, \dots, x_n)}^{\dim(A)}(A)\right)$. In the case $A = k[x_0, \dots, x_n]$ it is straightforward to see 
  ({\em e.g.}, using graded local duality) that  $a(A) = -(n+1)$, and if $I = (f_1, \dots, f_c)$ is generated by a homogeneous regular 
  sequence  then $a(A) = -(n+1) + \sum_1^c \deg(f_j)$. 
\end{remark}

\begin{thm}\label{Injectivity Theorem}
  Let $J = (f_1, \dots, f_c) \subseteq k[x_0, \dots, x_n] = S$ be an ideal generated by a homogeneous regular sequence and set $R = S/J$.
  Define $\tau$ as the smallest ideal of $S$ such that $J \subseteq \tau$ and $(f_1\cdots f_c)^{p-1} \in \tau^{[p]}$. Assume 
  $\sqrt{\tau} = \mathfrak{m} := (x_0, \dots, x_n)$. Then the below Frobenius action is injective:
  \[ F: H_\mathfrak{m}^{n+1-c}(R)_{< a(R) -\mathrm{reg}(S/\tau) } \to H_\mathfrak{m}^{n+1-c}(R)_{< p(a(R) -\mathrm{reg}(S/\tau)) }. \]
\end{thm}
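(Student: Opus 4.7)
The plan is to translate the statement through the \v{C}ech presentation of Remarks \ref{lch identification} and \ref{frobenius action}, and then combine the universal property defining $\tau$ with Proposition \ref{Regularity Proposition}. A class in $H^{n+1-c}_\mathfrak{m}(R)_t$ is represented by $[g/x^q]$ with $g$ homogeneous, $g \in (\mathfrak{m}^{[q]} : J)$, and $\deg(g) = t - d + (n+1)q$, and after replacing $q$ by a larger power (which fixes the class and its degree) I may assume $q$ is large enough that the asymptotic formula of Proposition \ref{Regularity Proposition}(2) applies to $\tau$. Under \eqref{identify}, Remark \ref{frobenius action} reads the hypothesis $F([g/x^q]) = 0$ as the single containment $f^{p-1}g^p \in \mathfrak{m}^{[pq]}$, so the whole task is to show that this, together with $gJ \subseteq \mathfrak{m}^{[q]}$, already forces $g \in (\mathfrak{m}^{[q]} : \tau)$.

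For that key step, the two conditions on $g$ combine to give
\[ J^{[p]} + (f^{p-1}) \subseteq (\mathfrak{m}^{[pq]} : g^p). \]
Because $S$ is regular, Frobenius is flat on $S$, and the standard colon identity for flat maps yields
\[ (\mathfrak{m}^{[pq]} : g^p) \,=\, \bigl((\mathfrak{m}^{[q]})^{[p]} : (g)^{[p]}\bigr) \,=\, (\mathfrak{m}^{[q]} : g)^{[p]}. \]
By the definition of $\tau$ as the smallest ideal $L$ with $L^{[p]} \supseteq J^{[p]} + (f^{p-1})$, these two displays force $\tau \subseteq (\mathfrak{m}^{[q]} : g)$, i.e.\ $g \in (\mathfrak{m}^{[q]} : \tau)$.

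The conclusion is then a degree count. Since $\sqrt{\tau} = \mathfrak{m}$, Proposition \ref{Regularity Proposition}(2) applies and gives, for $q \gg 0$,
\[ (\mathfrak{m}^{[q]} : \tau) \,\subseteq\, \mathfrak{m}^{[q]} + \mathfrak{m}^{M_q(\tau)} \qquad \text{with} \qquad M_q(\tau) \,=\, (n+1)(q-1) - \mathrm{reg}(S/\tau). \]
The degree hypothesis $t < a(R) - \mathrm{reg}(S/\tau) = -(n+1) + d - \mathrm{reg}(S/\tau)$ rearranges exactly to $\deg(g) < M_q(\tau)$. Because $g$ is homogeneous and every nonzero homogeneous element of $\mathfrak{m}^{M_q(\tau)}$ has degree at least $M_q(\tau)$, this strict inequality kills the $\mathfrak{m}^{M_q(\tau)}$-summand and leaves $g \in \mathfrak{m}^{[q]}$, so $[g/x^q] = 0$ in $H^{n+1-c}_\mathfrak{m}(R)$.

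The one delicate point is the colon identity $(\mathfrak{m}^{[pq]} : g^p) = (\mathfrak{m}^{[q]} : g)^{[p]}$: this is exactly what recasts the Frobenius-vanishing condition into the form $L^{[p]} \supseteq J^{[p]} + (f^{p-1})$ to which the minimality of $\tau$ applies. Everything else is bookkeeping between the $[-d]$ shift in the identification \eqref{identify} and the asymptotic formula for $M_q(\tau)$, and I expect the same argument to simultaneously produce an element of degree exactly $a(R) - \mathrm{reg}(S/\tau)$ in $\ker F$ (by taking $g$ to be a generator of $(\mathfrak{m}^{[q]} : \tau)/\mathfrak{m}^{[q]}$ of the extremal degree $M_q(\tau)$), witnessing sharpness.
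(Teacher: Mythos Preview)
Your argument is correct and follows essentially the same route as the paper: represent a class by $[g/x^q]$, use the colon identity $(\mathfrak{m}^{[pq]}:g^p)=(\mathfrak{m}^{[q]}:g)^{[p]}$ to recast $f^{p-1}g^p\in\mathfrak{m}^{[pq]}$ together with $J\subseteq(\mathfrak{m}^{[q]}:g)$ as $\tau\subseteq(\mathfrak{m}^{[q]}:g)$, and then finish with the degree bound from Proposition~\ref{Regularity Proposition}. The only cosmetic differences are that the paper phrases it as a proof by contradiction and states the minimality of $\tau$ via the pair of conditions $J\subseteq L$, $f^{p-1}\in L^{[p]}$ rather than your equivalent $J^{[p]}+(f^{p-1})\subseteq L^{[p]}$; your remark on sharpness also matches the paper's Remark~\ref{sharpness}.
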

\begin{proof}
  Let $f = f_1\cdots f_c$ and $d = \deg(f)$. Note that $a(R) = d - (n+1)$.
  We use \eqref{lch identification} and \eqref{frobenius action} to identify $H_\mathfrak{m}^{n+1-c}(R)$ with $\mathrm{Ann}_{H_\mathfrak{m}^{n+1}(S)[-d]}(J) =: T$ and 
  the Frobenius action on $H_\mathfrak{m}^{n+1-c}(R)$ with $f^{p-1}F|_T$. Aiming for a contradiction, suppose there exists nonzero $\alpha \in T$ 
  such that $f^{p-1}F(\alpha) = 0$ but $\deg(\alpha) < (d - (n+1) -\mathrm{reg}(S/\tau)) - d = -\mathrm{reg}(S/\tau) - (n+1)$. 
  We may represent $\alpha$ by $[g/x^{q}]$ for $q \gg 0$ and $g \in (\mathfrak{m}^{[q]} : J)\setminus \mathfrak{m}^{[q]}$. Using this representation,
  \begin{align*}
    f^{p-1}F(\alpha) = 0 &\iff f^{p-1}g^p \in \mathfrak{m}^{[pq]}\\
                         &\iff f^{p-1} \in (\mathfrak{m}^{[pq]}:g^p) = (\mathfrak{m}^{[q]}:g)^{[p]}.
  \end{align*}
  Since $[g/x^q] \in T$ we know $J \subseteq (\mathfrak{m}^{[q]} : g)$, so that $f^{p-1} \in (\mathfrak{m}^{[q]} : g)^{[p]}$ is equivalent to 
  $\tau \subseteq (\mathfrak{m}^{[q]} : g)$. This in turn is equivalent to $g \in (\mathfrak{m}^{[q]} : \tau)$. Since $g \not\in \mathfrak{m}^{[q]}$,
  $\deg(g) \ge M_q(\tau)$. Using \eqref{Regularity Proposition} we conclude
  \[ -\mathrm{reg}(S/\tau) - (n+1) = M_q(\tau) - (n+1)q  \le \deg(g) - (n+1)q = \deg(\alpha) < -\mathrm{reg}(S/\tau) - (n+1), \]
  a contradiction. 
\end{proof}

\begin{remark}\label{sharpness}
  Theorem \ref{Injectivity Theorem} is sharp: if we take $g \in (\mathfrak{m}^{[q]} : \tau) \setminus \mathfrak{m}^{[q]}$ of degree $M_q(\tau)$ for $q \gg 0$ then 
  $[g/x^q] \mapsto [f^{p-1}g^p/x^{pq}] = 0$. 
\end{remark}

\begin{coro}\label{Degree Bound}
  Using notation from \eqref{Injectivity Theorem}, assume $\tau \ne S$. Then $\sqrt{\tau} = \mathfrak{m}$ if and only if the below Frobenius action 
  is injective:
  \[ F: H_\mathfrak{m}^{n+1-c}(R)_{< -(n+1-c)d} \to H_\mathfrak{m}^{n+1-c}(R)_{< -p(n+1-c)d}. \]
\end{coro}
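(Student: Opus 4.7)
The plan is to combine Theorem \ref{Injectivity Theorem} with an upper bound on $\mathrm{reg}(S/\tau)$ for the forward direction, and to use the sharpness in Remark \ref{sharpness} for the converse.

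For the implication $\sqrt{\tau}=\mathfrak{m}\Rightarrow$ injectivity, it suffices to establish
\[ \mathrm{reg}(S/\tau)\ \le\ (n+2-c)d-(n+1), \]
since Theorem \ref{Injectivity Theorem} then yields injectivity of $F$ in degrees $<a(R)-\mathrm{reg}(S/\tau)\le-(n+1-c)d$. The key observation is the decomposition $\tau=J+K$, where $K$ is the smallest ideal of $S$ with $f^{p-1}\in K^{[p]}$; this follows from the minimality defining $\tau$ together with the identity $(J+K)^{[p]}=J^{[p]}+K^{[p]}$. Using the $p$-basis expansion of $f^{p-1}$ (available since $[k:k^p]<\infty$), the ideal $K$ is generated in degrees at most $\lfloor d(p-1)/p\rfloor\le d-1$. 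Thus $\tau/J\subseteq R$ is an $\mathfrak{m}_R$-primary ideal generated in degrees $\le d-1$.

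Graded prime avoidance -- applied after base change to an infinite extension of $k$ if necessary, which preserves $\mathrm{reg}(S/\tau)$ -- then furnishes a homogeneous system of parameters $\bar h_1,\ldots,\bar h_{n+1-c}$ of $R$ lying in $\tau/J$ with each $\deg\bar h_j\le d-1$. Lifting to $\tilde h_j\in\tau$, the elements $f_1,\ldots,f_c,\tilde h_1,\ldots,\tilde h_{n+1-c}$ form a homogeneous regular sequence contained in $\tau$. The standard Koszul regularity formula gives
\[ \mathrm{reg}(S/\tau) \ \le\ \mathrm{reg}\bigl(S/(f_1,\ldots,f_c,\tilde h_1,\ldots,\tilde h_{n+1-c})\bigr) \ =\ d+\sum_j\deg\tilde h_j-(n+1) \ \le\ (n+2-c)d-(n+1), \]
where for the first inequality I use that $\mathrm{reg}$ is monotone-decreasing under inclusion among $\mathfrak{m}$-primary ideals.

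For the converse, suppose $\sqrt{\tau}\ne\mathfrak{m}$, so $\mathfrak{m}^\ell\not\subseteq\tau$ for every $\ell$. By the ``only if'' half of Proposition \ref{Regularity Proposition}(1), $(n+1)q-M_q(\tau)$ is unbounded as $q\to\infty$. Choosing $q$ with $(n+1)q-M_q(\tau)>(n+2-c)d$ and applying Remark \ref{sharpness}, we obtain a class $[g/x^q]\in H_\mathfrak{m}^{n+1-c}(R)$ killed by $F$ in degree $M_q(\tau)-(n+1)q+d<-(n+1-c)d$, contradicting the injectivity hypothesis. The main technical subtlety of the whole argument is the graded prime avoidance step when $k$ is finite; this is handled by faithfully flat base change to an infinite extension, which leaves $\mathrm{reg}(S/\tau)$ unchanged.
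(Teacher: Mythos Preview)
Your proof is correct and follows essentially the same approach as the paper's: for the forward direction you bound $\mathrm{reg}(S/\tau)$ by exhibiting a homogeneous regular sequence $f_1,\dots,f_c,\tilde h_1,\dots,\tilde h_{n+1-c}$ in $\tau$ with the extra generators of degree $<d$, and for the converse you invoke the unboundedness of $(n+1)q-M_q(\tau)$ together with Remark~\ref{sharpness}. The only cosmetic difference is that where the paper cites \cite[2.4]{BMS} for the degree bound on the $\phi_i$, you unpack this via the decomposition $\tau=J+K$, the $p$-basis expansion of $f^{p-1}$, and graded prime avoidance (with the base-change remark for finite $k$); the Hilbert series computation in the paper is exactly your ``Koszul regularity formula'' plus monotonicity of $\mathrm{reg}$ for $\mathfrak{m}$-primary quotients.
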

\begin{proof}
  If $\sqrt{\tau} \ne \mathfrak{m}$ then the sequence $\{M_q(\tau) - (n+1)q\}_{q \ge 1}$ is unbounded below by 
  \eqref{Regularity Proposition}. Remark \ref{sharpness} shows that there always exists a kernel element of
  degree $M_q(\tau) - (n+1)q$ for any $q$. 

  Suppose $\sqrt{\tau} = \mathfrak{m}$. Then by \eqref{Injectivity Theorem} it suffices to show 
  \[ -(n+1-c)d \le d - (n+1) -\mathrm{reg}(S/\tau). \]
  Towards this end, since $\sqrt{\tau} = \mathfrak{m}$ we know that there must exist generators $\phi_1, \dots, \phi_{n+1-c} \in \tau$ such that
  $\underline{\phi}, \underline{f}$ is a regular sequence on $S$. Let $\mathfrak{b} = (\underline{\phi}, \underline{f})$. By \cite[2.4]{BMS} we may choose the $\phi_i$ so that
  $\deg{\phi_i} \le p^{-1} d(p-1) < d$. Thus the Hilbert series $HS(S/\mathfrak{b}, t)$ is
  \[ \frac{\Pi_{i=1}^{n+1-c}(1 - t^{\deg{\phi_i}}) \Pi_{j=1}^c (1-t^{d_i})}{(1 - t)^{n+1}} \]
  which is a polynomial of degree 
  \[ \left(\sum_1^{n+1-c} \deg(\phi_i) \right) + \left(\sum_1^c \deg{f_j}\right) - (n+1) < (n+1-c)d + d - (n+1). \]
  Therefore, $\mathfrak{m}^{(n+1-c)d + d - (n+1)+1}\subseteq \mathfrak{b} \subseteq \tau$ and we conclude
  \begin{align*}
    -\mathrm{reg}(S/\tau) - (n+1) + d &\ge -((n+1-c)d + d - (n+1)) - (n+1) + d\\
                            &= -(n+1-c)d. 
  \end{align*}                                                                             
\end{proof}

We conclude this section by extending Bhatt and Singh's result \cite[3.5]{Bhatt-Singh} using a generalization of their method with an 
application of the determinant trick used in the proof of \cite[2.1]{FedderGradedCI}. 

\begin{thm}\label{Bhatt-Singh style}
  Using the notation from \eqref{Injectivity Theorem}, let $\mathrm{Jac}(R)$ be the ideal of $(c\times c)$ minors of the Jacobian matrix 
  $(\partial f_j/\partial x_i)$, $0 \le i \le n$, $1 \le j \le c$. Suppose $\sqrt{\mathrm{Jac}(R) + J} = \mathfrak{m}$. If $p \ge (n + 1 - c)(d-c)$ then the below 
  Frobenius action is injective:
  \[ F: H^{n+1-c}_\mathfrak{m}(R)_{<0} \to H^{n+1-c}_\mathfrak{m}(R)_{<0}. \]
\end{thm}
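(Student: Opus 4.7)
The plan is to apply Theorem~\ref{Injectivity Theorem}: injectivity of $F$ on $H^{n+1-c}_\mathfrak{m}(R)_{<0}$ will follow from that theorem once we verify its standing hypotheses and establish $a(R)-\mathrm{reg}(S/\tau)\ge 0$, i.e.\ $\mathrm{reg}(S/\tau)\le d-(n+1)$. The isolated-singularity assumption $\sqrt{\mathrm{Jac}(R)+J}=\mathfrak{m}$ makes $R_\mathfrak{p}$ regular, and hence $F$-pure, for every prime $\mathfrak{p}\ne\mathfrak{m}$ containing $J$; and if $\tau=S$ then $R$ is itself $F$-pure and the theorem's conclusion holds in every degree. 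So we may assume $\tau\ne S$, and everything reduces to proving the regularity bound (which itself forces $\mathfrak{m}^{d-n}\subseteq\tau$ and thus $\sqrt\tau=\mathfrak{m}$, the last remaining hypothesis of Theorem~\ref{Injectivity Theorem}).

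To prove this bound one combines Fedder's determinant trick with the Bhatt--Singh hypersurface method. Using $\sqrt{\mathrm{Jac}(R)+J}=\mathfrak{m}$, select $n+1-c$ many $c\times c$ minors $\Delta_1,\dots,\Delta_{n+1-c}$ of the Jacobian matrix $(\partial f_j/\partial x_i)$ --- each of degree $d-c$ --- so that $f_1,\dots,f_c,\Delta_1,\dots,\Delta_{n+1-c}$ is a regular sequence on~$S$. Fedder's determinant trick in the proof of \cite[Theorem~2.1]{FedderGradedCI} then uses Cramer's rule, applied to the Euler-identity matrix relation $(x_0,\dots,x_n)(\partial f_j/\partial x_i)=((\deg f_j)f_j)_j$, to produce concrete relations between the $\Delta_I$'s, the variables $x_i$, and the $f_j$'s; paired with the defining relation $f^{p-1}\in\tau^{[p]}$, these manufacture elements of $\tau$. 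The complete intersection $S/(f_1,\dots,f_c,\Delta_1,\dots,\Delta_{n+1-c})$ has socle degree $d-(n+1)+(n+1-c)(d-c)$, so this route alone only yields the weaker estimate $\mathrm{reg}(S/\tau)\le d-(n+1)+(n+1-c)(d-c)$, which is off by exactly $(n+1-c)(d-c)$.

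The hypothesis $p\ge(n+1-c)(d-c)$ should enter precisely to close this gap. Following Bhatt--Singh, I expect one expands $f^{p-1}$ in the free $S^{[p]}$-basis $\{x^r:r\in\{0,\dots,p-1\}^{n+1}\}$ of $S$, so that the ``$p$th roots'' of the resulting coefficients all lie in $\tau$; the bound on $p$ is then just what is needed to peel off $n+1-c$ minors $\Delta_i$ from $f^{p-1}$ via repeated application of the determinant trick and arrive at $\mathfrak{m}^{d-n}\subseteq\tau$. Once $\mathrm{reg}(S/\tau)\le d-(n+1)$ is in hand, Theorem~\ref{Injectivity Theorem} finishes the proof. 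The main obstacle is executing this last step cleanly: setting up the Jacobian minors and invoking Fedder's trick are fairly routine, but extracting the sharp regularity estimate from the hypothesis on $p$ requires a careful bookkeeping of how each $\Delta_i$ contributes to the $S^{[p]}$-expansion of $f^{p-1}$, and is the technical heart of the argument.
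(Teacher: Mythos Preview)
Your proposal has a genuine gap: you reduce everything to the regularity bound $\mathrm{reg}(S/\tau)\le d-(n+1)$, but then do not prove it---you only say you ``expect'' an $S^{[p]}$-basis expansion of $f^{p-1}$ together with the Euler identity will do the job, and you concede this is the ``main obstacle.'' Since Theorem~\ref{Injectivity Theorem} is sharp (Remark~\ref{sharpness}), this regularity bound is actually \emph{equivalent} to the conclusion you are trying to prove, so the reduction is correct but you have not moved past it. Moreover the mechanism you sketch (Euler identity applied to $f^{p-1}$, then taking $p$th roots of coefficients) is not how the argument runs, and it is unclear how one would make the ``peeling off $\Delta_i$'s'' precise enough to obtain the exact threshold $(n+1-c)(d-c)$.

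The paper's proof does not pass through $\tau$ or Theorem~\ref{Injectivity Theorem} at all. It works directly with a putative nonzero kernel element $[g/x^{q/p}]$ of degree $\le -1$ and argues by contradiction. The crucial new idea is to choose a \emph{minimal} exponent vector $\mathbf{t}\in\{0,\dots,p-1\}^c$ with $f^{\mathbf{t}}g^p\in\mathfrak{m}^{[q]}$, then differentiate $f^{\mathbf{t}}g^p$ (not $f^{p-1}$) with respect to each $x_i$: since $\partial_i(\mathfrak{m}^{[q]})\subseteq\mathfrak{m}^{[q]}$ and $\partial_i(g^p)=0$, the product rule yields a matrix equation which, after multiplying by the adjugate, gives $f^{\mathbf{t}'}g^p\in(\mathfrak{m}^{[q]}:\mathrm{Jac}(R))$ where $\mathbf{t}'=(t_1-1,t_2,\dots,t_c)$. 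Separately, minimality of $\mathbf{t}$ gives $f^{\mathbf{t}'}g^p\in(\mathfrak{m}^{[q]}:(f_1,f_2^{p-t_2},\dots,f_c^{p-t_c}))$. One then chooses minors $\Delta_1,\dots,\Delta_{n+1-c}$ so that $\underline{\Delta},f_1,f_2^{p-t_2},\dots,f_c^{p-t_c}$ is a regular sequence, computes the socle degree of the resulting complete intersection, and a degree count forces $p<(n+1-c)(d-c)$. The Euler identity plays no role; the determinant trick is applied to $\partial_i(f^{\mathbf{t}}g^p)$, and the use of the varying powers $f_j^{p-t_j}$ (rather than $f_j$) is exactly what makes the degree bookkeeping close up.
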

\begin{proof}
  Again using the identification $H_\mathfrak{m}^{n+1-c}(R) = \mathrm{Ann}_{H_\mathfrak{m}^{n+1}(S)[-d]}(J) =: T$ of \eqref{lch identification}, suppose there exists 
  $0 \ne \alpha = [g/x^{q/p}] \in T_{\le -1}$ with $f^{p-1}F(\alpha) = [f^{p-1}g^p/x^q] = 0$; then $\deg(g) - (n+1)(q/p) \le -d - 1$. 
  We show this implies $p < (n + 1 - c)(d - c)$, contradicting our assumption on $p$.

  Define $(t_1, \dots, t_c) = \mathbf{t}\in \{0, 1, \dots, p-1\}^c$ to be the least vector such that 
  $f^{\mathbf{t}}g^p \in \mathfrak{m}^{[q]}$, in the sense that if $\mathbf{s} = (s_1, \dots, s_c)$ with $s_j \le t_j$ for all $j$ and 
  $s_\ell < t_\ell$ for at least one $\ell$, then $f^\mathbf{s}g^p \not\in \mathfrak{m}^{[q]}$.
  Note that $\mathbf{t}$ exists since $f^{p-1}g^p \in \mathfrak{m}^{[q]}$, and furthermore at least one $t_j \ne 0$ since $g \not\in \mathfrak{m}^{[q/p]}$ 
  (as $[g/x^{q/p}] \ne 0)$ and thus $g^p \not\in \mathfrak{m}^{[q]}$. Without loss of generality, we assume $t_1 \ne 0$. For $1 \le j \le c$ write 
  \[ 
    \hat{f_j} = \prod_{i \in \mathcal{C}} f_i, \;\text{where $\mathcal{C} = \{ i \in \{1, \dots, c\} \,|\, i \ne j \text{ and } t_i > 0\}$.} 
  \]
  Let $\mathbf{t'} = (t_1 -1, t_2, \dots, t_c)$; we know $f^\mathbf{t'}g^p \not\in \mathfrak{m}^{[q]}$ by definition of $\mathbf{t}$. 

  \begin{claim}
    $f^\mathbf{t'}g^p \in (\mathfrak{m}^{[q]}: \mathrm{Jac}(R))$. 
  \end{claim}
  \begin{pfClaim}
    Define $\mathbf{t}^*$ by $\mathbf{t}^*_j = \max\{t_j - 1, 0\}$ for $1 \le j \le c$. If $\partial_i$ is the partial derivative with respect to 
    $x_i$, then $\partial_i(\mathfrak{m}^{[q]}) \subseteq \mathfrak{m}^{[q]}$ for all $i$ and so 
    \begin{align*}
      \partial_i(f^{\mathbf{t}}g^p) &= g^pf^{\mathbf{t}^*}\left(\sum_{j = 1}^c t_j\hat{f_j} \partial_i(f_j)\right) \equiv 0 \pmod{\mathfrak{m}^{[q]}}. 
    \end{align*}
    For any choice of $\mathbf{i} = (i_1, \dots, i_c)$ with $0 \le i_1 < i_2 < \cdots < i_c \le n$, we have a matrix 
    equation
    \[ 
      f^{\mathbf{t}^*} g^p
      \begin{pmatrix}
        \partial_{i_1}(f_1) & \cdots & \partial_{i_1}(f_c) \\
        \vdots & \ddots & \vdots \\
        \partial_{i_c}(f_1) & \cdots & \partial_{i_c}(f_c)\\
      \end{pmatrix} 
      \begin{pmatrix}
        t_1 \hat{f_1} \\ \vdots \\ t_c \hat{f_c}
      \end{pmatrix}
      \equiv 
      \begin{pmatrix}
        0 \\ \vdots \\ 0
      \end{pmatrix}
      \pmod{\mathfrak{m}^{[q]}}. 
    \]
    Calling the determinant of the above matrix of partial derivatives $\Delta_\mathbf{i}$, after multiplying by the adjugate\footnote{The 
    {\em adjugate} $\mathrm{adj}(B)$ of an ($m\times m$) matrix $B$ has the property $\mathrm{adj}(B)B = \det(B)I_m$.} above we have
    \[ 
      t_j \hat{f_j} f^{\mathbf{t}^*}g^p \Delta_\mathbf{i} \equiv 0 \pmod{\mathfrak{m}^{[q]}} \text{ for all $1 \le j \le c$. }
    \]
    Since $\mathrm{Jac}(R)$ is generated by the determinants $\Delta_\mathbf{i}$ ranging over all choices of $\mathbf{i}$ and $0 < t_1 < p$ we conclude
    \[ 
      f^\mathbf{t'}g^p\mathrm{Jac}(R) = \hat{f_1} f^{\mathbf{t}^*}g^p \mathrm{Jac}(R) \subseteq \mathfrak{m}^{[q]}. 
    \]
  \end{pfClaim}

  We now claim $f^\mathbf{t'}g^p \in (\mathfrak{m}^{[q]} : (f_1, f_2^{p-t_2}, \dots, f_c^{p - t_c}))$. Indeed, we defined $\mathbf{t}$ to have the property 
  $f_1f^\mathbf{t'}g^p = f^\mathbf{t}g^p \in \mathfrak{m}^{[q]}$; if $j > 1$ then  $f_j^pg^p$ divides $f_j^{p-t_j}f^\mathbf{t'}g^p$ and we know 
  $f_j^p g^p \in \mathfrak{m}^{[q]}$. 

  Since $\mathrm{Jac}(R) + J$ is $\mathfrak{m}$-primary, there must exist a sequence of determinants $\underline{\Delta} := \Delta_1, \dots, \Delta_{n+1-c}$ so
  that $\underline{\Delta}, \underline{f}$ gives a maximal regular sequence on $S$. Then $\underline{\Delta}, f_1, f_2^{p-t_2}, \dots, f_c^{p-t_c}$ is also 
  a regular sequence, and setting
  $\mathfrak{b} = (\underline{\Delta}, f_1, f_2^{p-t_2}, \dots, f_c^{p - t_c})$ we have $\mathfrak{b} \subseteq \mathrm{Jac}(R) + (f_1, f_2^{p-t_2}, \dots, f_c^{p-t_c})$.  If 
  we set
  \[ \ell = (n+1-c)(d-c) + d_1 + \left(\sum_{j=2}^c (p - t_j) d_j\right) - n \]
  then a Hilbert series argument similar to the one found in the proof of \eqref{Degree Bound} shows that $\mathfrak{m}^\ell \subseteq \mathfrak{b}$. We will use
  this containment to bound the degree of $f^\mathbf{t'}g^p$. Towards this end, we know
  \begin{align*} 
    f^\mathbf{t'} g^p &\in (\mathfrak{m}^{[q]}: \mathrm{Jac}(R) + (f_1, f_2^{p-t_2}, \dots, f_c^{p-t_c})) \\
                              &\subseteq (\mathfrak{m}^{[q]}: \mathfrak{b})\\
                              &\subseteq (\mathfrak{m}^{[q]}: \mathfrak{m}^\ell)\\
                              &= \mathfrak{m}^{[q]} + \mathfrak{m}^{q(n+1) - n - \ell}. 
  \end{align*}
  Since $f^\mathbf{t'} g^p \not\in \mathfrak{m}^{[q]}$ we conclude $f^\mathbf{t'}g^p \in \mathfrak{m}^{q(n+1)- n - \ell}$. This implies
  \begin{align*}
    q(n+1) - n - \ell &= q(n+1) - n - (n + 1 - c)(d - c) - d_1 - \left(\sum_{j=2}^c (p - t_j)d_j\right) + n\\
                      &= q(n+1) - (n + 1 - c)(d - c) + (p - 1)d_1 - pd + \left(\sum_{j=2}^c t_jd_j\right) \\
                      &\le \deg(f^\mathbf{t'}g^p)\\
                      &= (t_1 - 1)d_1 + \left(\sum_{j=2}^c t_jd_j\right) + p\deg(g). 
  \end{align*}
  Now recalling that $p\deg(g) \le q(n+1) - pd - p$ and $t_1 - 1 < p - 1$, we have
  \[
    q(n+1) + (p - 1)d_1 - pd - (n + 1 - c)(d - c) < q(n+1) + (p - 1)d_1 - pd - p
  \]
  which simplifies to  $p < (n+1-c)(d-c)$.
\end{proof}

\begin{exa}
  Let $f = x^2y^2 + y^2z^2 + z^2x^2 \in k[x,y,z] = S$ with $\mathrm{char}(k) > 2$. Then $\tau = \mathfrak{m}$ but $f$ does not have an isolated 
  singularity. In this case, the Bhatt-Singh result \cite[Theorem 3.5]{Bhatt-Singh} does not apply. Theorem \ref{Injectivity Theorem} now tells 
  us that the Frobenius action on $H^2_\mathfrak{m}(S/fS)$ is injective in degrees $\le 0$. Note that in this case, 
  $H^2_\mathfrak{m}(S/fS)_1 \ne 0$ but $H^2_\mathfrak{m}(S/fS)_{\ge 2} = 0$ so the Frobenius action on $H^2_\mathfrak{m}(S/fS)_1$ is zero.
\end{exa}

\begin{exa}
  A smooth projective variety $X = \mathrm{Proj}(R)$ is a {\em Calabi-Yau variety} if $\omega_X = \mathscr{O}_X$. If $R$ is a complete 
  intersection, this is equivalent to $a(R) = 0$. In this case, we can combine theorems \ref{Injectivity Theorem} and \ref{Bhatt-Singh style} 
  to conclude that for $p \gg 0$ the Frobenius action on local cohomology of $R$ can only fail to be injective in degree $0$. 
  Thus, $\tau \in \{\mathfrak{m}, S\}$ for Calabi-Yau complete intersections and all $p \gg 0$. 
\end{exa}

\end{document}